 \newcounter{case}
 \newenvironment{case}[1][\unskip]{\refstepcounter{case}\normalfont
 \medbreak \noindent  \textbf{Case \thecase\ #1.\ }}{\unskip\upshape}
 \renewcommand{\thecase}{\arabic{case}}
\newtheorem{thm}{Theorem}[section]
\newtheorem*{thm*}{Theorem}
\newtheorem*{conj*}{Conjecture}
\newtheorem{lem}[thm]{Lemma}
\theoremstyle{remark}
\theoremstyle{definition}
\newtheorem{defn}[thm]{Definition}
\newcounter{claim}[thm]
\renewcommand{\theclaim}{\noindent{(\arabic{claim})}}
\newcommand{\claim}{\medskip\refstepcounter{claim}{\bf\theclaim} \emph{}}
\title{On generalised Petersen graphs of girth 7 that have cop number 4}
\author{Harmony Morris}
\address{Winston Churchill High School\\
1605 15 Ave N\\
Lethbridge, AB\\
T1H 1W4\\
Canada}
\email{harmony.morris23@lethsd.ab.ca}
\author{Joy Morris}
\address{Department of Mathematics and Computer Science\\
University of Lethbridge\\
Lethbridge, AB\\
T1K 3M4\\
Canada}
\thanks{The second author was supported by the Natural Science and Engineering Research Council of Canada (grant RGPIN-2017-04905).}
\email{joy.morris@uleth.ca}
\subjclass[2020]{05C57, 91A46}
\begin{document}

\begin{abstract}
We show that if $n=7k/i$ with $i \in \{1,2,3\}$ then the cop number of the generalised Petersen graph $GP(n,k)$ is $4$, with some small previously-known exceptions. It was previously proved by Ball et al.~(2015) that the cop number of any generalised Petersen graph is at most $4$. The results in this paper explain all of the known generalised Petersen graphs that actually have cop number $4$ but were not previously explained by Morris et al.~in a recent preprint, and places them in the context of infinite families. (More precisely, the preprint by Morris et al.~explains all known generalised Petersen graphs with cop number $4$ and girth $8$, while this paper explains those that have girth $7$.)
\end{abstract}

\maketitle

\section{Introduction}

Cops and robbers is a game that can be played on any graph. There are two players: one playing the cops and the other playing the robber. The two players take turns, with the cop going first. On their first turns, each player chooses a vertex on which to place each of their pieces. On all subsequent turns, they may move any or all of their pieces to any neighbouring vertex. The object of the game for the cops is to capture the robber by landing on the same vertex as them. The object for the robber is to avoid being captured forever.

This game first appears in~\cite{nowa}~and~\cite{quilliot}, and has been studied extensively on many families of graphs (see~\cite{anthony} as an excellent reference). 

We are interested in studying this game only on the well-known family of generalised Petersen graphs. The \emph{generalised Petersen graph} $GP(n,k)$ is a graph on $2n$ vertices whose vertex set is the union of $A=\{a_0, \ldots, a_{n-1}\}$ and $B=\{b_0, \ldots, b_{n-1}\}$. The edges have one of three possible forms:
\begin{itemize}
\item $\{a_i, a_{i+1}\}$;
\item $\{a_i,b_i\}$; and
\item $\{b_i, b_{i+k}\}$,
\end{itemize}
where subscripts are calculated modulo $n$. This family generalises the Petersen graph, which is $GP(5,2)$. We require $n \ge 5$, and $k<n/2$. This ensures that the graphs are cubic, and avoids some isomorphic copies of graphs.

The girth of a generalised Petersen graph is at most $8$, and is well-understood from the parameters $n$ and $k$ (see~\cite[Theorem 5]{boben}). Up to isomorphisms, a generalised Petersen graph has girth $7$ if and only if its parameters satisfy one of the following conditions:
\begin{itemize}
\item $n =7k/i$ where $i \in \{1,2,3\}$;
\item $k=4$;
\item $n=2k+3$; or
\item $n=3k\pm 2$.
\end{itemize} 

In~\cite{ball}, Ball et al.~showed that the cop number of every generalised Petersen graph is at most $4$. They also provided a list of all generalised Petersen graphs with $n \le 40$ that attain this bound. (Minor corrections to this list were included in~\cite{MRS}, which also proved that every generalised Petersen graph of girth $8$ that appears on this list falls into an infinite class of generalised Petersen graphs of girth $8$ that have cop number $4$.) Almost all graphs on this list have girth $8$, but there are three graphs of girth $7$ on the list: 
\begin{itemize}
\item $GP(28,8)$;
\item $GP(35,10)$; and
\item $GP(35,15)$.
\end{itemize}
Notably, all of these graphs have parameters of the form $n=7k/2$ or $n=7k/3$. In this paper, we show that with the exception of the generalised Petersen graph with $n<42$ that do not appear above, every generalised Petersen graph whose parameters have the form $n=7k/i$ where $i \in \{1,2,3\}$ has cop number $4$. Thus, in light of the results of~\cite{MRS}, we show that all known generalised Petersen graphs with cop number $4$ are included in infinite families that have this property.

The neighbourhood (up to distance $4$) of an arbitrary vertex $a_i \in A$ is shown in Figure~\ref{figA}, while that for an arbitrary vertex $b_i \in B$ is shown in Figure~\ref{figB}. Note that at distance $4$, some vertices may be the same as others; we have used paired copies of a shape on the corresponding nodes to indicate the vertices that are in fact a single vertex. In our results, we will assume that either $n \ge 42$, or $(n,k) \in \{(28,8),(35,10),(35,15)\}$. Careful calculations (left to the reader) verify that this assumption avoids any additional vertices at distance $4$ being the same as each other. This will be of critical importance in our proofs.

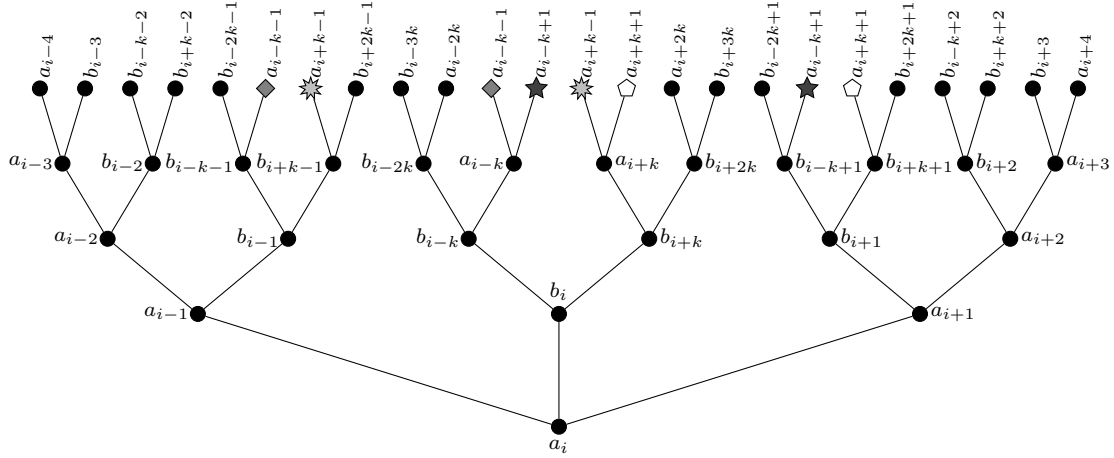
\begin{figure}\caption{The vertices within distance $4$ of $a_i$, in our families of graphs.}
\label{figA}

\begin{tikzpicture}

\node[label={[label distance=-2 pt,font=\tiny]below:$a_i$}] (po1nt) at (0,-.5) [circle,draw,fill=black, inner sep=2pt] {};
\node[label={[label distance=-4 pt,font=\tiny]left:$a_{i-1}$}] (po2nt) at (-4.8,1) [circle,draw,fill=black, inner sep=2pt] {};
\node[label={[label distance=-2 pt,font=\tiny]above:$b_{i}$}] (po3nt) at (0,1) [circle,draw,fill=black, inner sep=2pt] {};
\node[label={[label distance=-3 pt,font=\tiny]right:$a_{i+1}$}] (po4nt) at (4.8,1) [circle,draw,fill=black, inner sep=2pt] {};
\node[label={[label distance=-4 pt,font=\tiny]left:$a_{i-2}$}] (po5nt) at (-6,2) [circle,draw,fill=black, inner sep=2pt] {};
\node[label={[label distance=-4 pt,font=\tiny]left:$b_{i-1}$}] (po6nt) at (-3.6,2) [circle,draw,fill=black, inner sep=2pt] {};
\node[label={[label distance=-4 pt,font=\tiny]left:$b_{i-k}$}] (po7nt) at (-1.2,2) [circle,draw,fill=black, inner sep=2pt] {};
\node[label={[label distance=-3 pt,font=\tiny]right:$b_{i+k}$}] (po8nt) at (1.2,2) [circle,draw,fill=black, inner sep=2pt] {};
\node[label={[label distance=-3 pt,font=\tiny]right:$b_{i+1}$}] (po9nt) at (3.6,2) [circle,draw,fill=black, inner sep=2pt] {};
\node[label={[label distance=-3 pt,font=\tiny]right:$a_{i+2}$}] (po10nt) at (6,2) [circle,draw,fill=black, inner sep=2pt] {};
\node[label={[label distance=-4 pt,font=\tiny]left:$a_{i-3}$}] (po11nt) at (-6.6,3) [circle,draw,fill=black, inner sep=2pt] {};
\node[label={[label distance=-4 pt,font=\tiny]left:$b_{i-2}$}] (po12nt) at (-5.4,3) [circle,draw,fill=black, inner sep=2pt] {};
\node[label={[label distance=-4 pt,font=\tiny]left:$b_{i-k-1}$}] (po13nt) at (-4.2,3) [circle,draw,fill=black, inner sep=2pt] {};
\node[label={[label distance=-4 pt,font=\tiny]left:$b_{i+k-1}$}] (po14nt) at (-3,3) [circle,draw,fill=black, inner sep=2pt] {};
\node[label={[label distance=-4 pt,font=\tiny]left:$b_{i-2k}$}] (po15nt) at (-1.8,3) [circle,draw,fill=black, inner sep=2pt] {};
\node[label={[label distance=-4 pt,font=\tiny]left:$a_{i-k}$}] (po16nt) at (-.6,3) [circle,draw,fill=black, inner sep=2pt] {};
\node[label={[label distance=-3 pt,font=\tiny]right:$a_{i+k}$}] (po17nt) at (0.6,3) [circle,draw,fill=black, inner sep=2pt] {};
\node[label={[label distance=-3 pt,font=\tiny]right:$b_{i+2k}$}] (po18nt) at (1.8,3) [circle,draw,fill=black, inner sep=2pt] {};
\node[label={[label distance=-3 pt,font=\tiny]right:$b_{i-k+1}$}] (po19nt) at (3,3) [circle,draw,fill=black, inner sep=2pt] {};
\node[label={[label distance=-3 pt,font=\tiny]right:$b_{i+k+1}$}] (po20nt) at (4.2,3) [circle,draw,fill=black, inner sep=2pt] {};
\node[label={[label distance=-3 pt,font=\tiny]right:$b_{i+2}$}] (po21nt) at (5.4,3) [circle,draw,fill=black, inner sep=2pt] {};
\node[label={[label distance=-3 pt,font=\tiny]right:$a_{i+3}$}] (po22nt) at (6.6,3) [circle,draw,fill=black, inner sep=2pt] {};
\node[label={[font=\tiny,rotate=90]right:$a_{i-4}$}] (po23nt) at (-6.9,4) [circle,draw,fill=black, inner sep=2pt] {};
\node[label={[font=\tiny,rotate=90]right:$b_{i-3}$}] (po24nt) at (-6.3,4) [circle,draw,fill=black, inner sep=2pt] {};
\node[label={[font=\tiny,rotate=90]right:$b_{i-k-2}$}] (po25nt) at (-5.7,4) [circle,draw,fill=black, inner sep=2pt] {};
\node[label={[font=\tiny,rotate=90]right:$b_{i+k-2}$}] (po26nt) at (-5.1,4) [circle,draw,fill=black, inner sep=2pt] {};
\node[label={[font=\tiny,rotate=90]right:$b_{i-2k-1}$}] (po27nt) at (-4.5,4) [circle,draw,fill=black, inner sep=2pt] {};
\node[label={[font=\tiny,rotate=90]right:$a_{i-k-1}$}] (po28nt) at (-3.9,4) [diamond,draw,fill=black!50, inner sep=1.75pt, minimum width=1.75pt] {};
\node[label={[font=\tiny,rotate=90]right:$a_{i+k-1}$}] (po29nt) at (-3.3,4) [star,star points=9,star point ratio=2,draw,fill=black!25, inner sep=1.5pt, minimum width=1.5pt] {};
\node[label={[font=\tiny,rotate=90]right:$b_{i+2k-1}$}] (po30nt) at (-2.7,4) [circle,draw,fill=black, inner sep=2pt] {};
\node[label={[font=\tiny,rotate=90]right:$b_{i-3k}$}] (po31nt) at (-2.1,4) [circle,draw,fill=black, inner sep=2pt] {};
\node[label={[font=\tiny,rotate=90]right:$a_{i-2k}$}] (po32nt) at (-1.5,4) [circle,draw,fill=black, inner sep=2pt] {};
\node[label={[font=\tiny,rotate=90]right:$a_{i-k-1}$}] (po33nt) at (-.9,4) [diamond,draw,fill=black!50, inner sep=1.75pt, minimum width=1.75pt] {};
\node[label={[font=\tiny,rotate=90]right:$a_{i-k+1}$}] (po34nt) at (-.3,4) [star,star points=5,star point ratio=2,draw,fill=black!75, inner sep=1.5pt, minimum width=1.5pt] {};
\node[label={[font=\tiny,rotate=90]right:$a_{i+k-1}$}] (po35nt) at (.3,4) [star,star points=9,star point ratio=2,draw,fill=black!25, inner sep=1.5pt, minimum width=1.5pt] {};
\node[label={[font=\tiny,rotate=90]right:$a_{i+k+1}$}] (po36nt) at (.9,4) [regular polygon,regular polygon sides=5,draw,fill=white, inner sep=2pt, minimum width=2pt] {};
\node[label={[font=\tiny,rotate=90]right:$a_{i+2k}$}] (po37nt) at (1.5,4) [circle,draw,fill=black, inner sep=2pt] {};
\node[label={[font=\tiny,rotate=90]right:$b_{i+3k}$}] (po38nt) at (2.1,4) [circle,draw,fill=black, inner sep=2pt] {};
\node[label={[font=\tiny,rotate=90]right:$b_{i-2k+1}$}] (po39nt) at (2.7,4) [circle,draw,fill=black, inner sep=2pt] {};
\node[label={[font=\tiny,rotate=90]right:$a_{i-k+1}$}] (po40nt) at (3.3,4) [star,star points=5,star point ratio=2,draw,fill=black!75, inner sep=1.5pt, minimum width=1.5pt] {};
\node[label={[font=\tiny,rotate=90]right:$a_{i+k+1}$}] (po41nt) at (3.9,4) [regular polygon,regular polygon sides=5,draw,fill=white, inner sep=2pt, minimum width=2pt] {};
\node[label={[font=\tiny,rotate=90]right:$b_{i+2k+1}$}] (po42nt) at (4.5,4) [circle,draw,fill=black, inner sep=2pt] {};
\node[label={[font=\tiny,rotate=90]right:$b_{i-k+2}$}] (po43nt) at (5.1,4) [circle,draw,fill=black, inner sep=2pt] {};
\node[label={[font=\tiny,rotate=90]right:$b_{i+k+2}$}] (po44nt) at (5.7,4) [circle,draw,fill=black, inner sep=2pt] {};
\node[label={[font=\tiny,rotate=90]right:$b_{i+3}$}] (po45nt) at (6.3,4) [circle,draw,fill=black, inner sep=2pt] {};
\node[label={[font=\tiny,rotate=90]right:$a_{i+4}$}] (po46nt) at (6.9,4) [circle,draw,fill=black, inner sep=2pt] {};
\draw (po1nt)--(po2nt)--(po5nt)--(po11nt)--(po23nt);
\draw (po11nt)--(po24nt);
\draw (po5nt)--(po12nt)--(po25nt);
\draw (po12nt)--(po26nt);
\draw (po2nt)--(po6nt)--(po13nt)--(po27nt);
\draw (po13nt)--(po28nt);
\draw (po6nt)--(po14nt)--(po29nt);
\draw (po14nt)--(po30nt);
\draw (po1nt)--(po3nt)--(po7nt)--(po15nt)--(po31nt);
\draw (po15nt)--(po32nt);
\draw (po7nt)--(po16nt)--(po33nt);
\draw (po16nt)--(po34nt);
\draw (po3nt)--(po8nt)--(po17nt)--(po35nt);
\draw (po17nt)--(po36nt);
\draw (po8nt)--(po18nt)--(po37nt);
\draw (po18nt)--(po38nt);
\draw (po1nt)--(po4nt)--(po9nt)--(po19nt)--(po39nt);
\draw (po19nt)--(po40nt);
\draw (po9nt)--(po20nt)--(po41nt);
\draw (po20nt)--(po42nt);
\draw (po4nt)--(po10nt)--(po21nt)--(po43nt);
\draw (po21nt)--(po44nt);
\draw (po10nt)--(po22nt)--(po45nt);
\draw (po22nt)--(po46nt);

\end{tikzpicture}

\end{figure}

\begin{figure}\caption{The vertices within distance $4$ of $b_i$, in our families of graphs. The dotted lines represent an edge between $b_{i+3k}$ and $b_{i-3k}$.}
\label{figB}

\begin{tikzpicture}

\node[label={[label distance=-2 pt,font=\tiny]below:$b_i$}] (po1nt) at (0,-.5) [circle,draw,fill=black, inner sep=2pt] {};
\node[label={[label distance=-4 pt,font=\tiny]left:$b_{i-k}$}] (po2nt) at (-4.8,1) [circle,draw,fill=black, inner sep=2pt] {};
\node[label={[label distance=-2 pt,font=\tiny]above:$a_{i}$}] (po3nt) at (0,1) [circle,draw,fill=black, inner sep=2pt] {};
\node[label={[label distance=-3 pt,font=\tiny]right:$b_{i+k}$}] (po4nt) at (4.8,1) [circle,draw,fill=black, inner sep=2pt] {};
\node[label={[label distance=-4 pt,font=\tiny]left:$b_{i-2k}$}] (po5nt) at (-6,2) [circle,draw,fill=black, inner sep=2pt] {};
\node[label={[label distance=-4 pt,font=\tiny]left:$a_{i-k}$}] (po6nt) at (-3.6,2) [circle,draw,fill=black, inner sep=2pt] {};
\node[label={[label distance=-4 pt,font=\tiny]left:$a_{i-1}$}] (po7nt) at (-1.2,2) [circle,draw,fill=black, inner sep=2pt] {};
\node[label={[label distance=-3 pt,font=\tiny]right:$a_{i+1}$}] (po8nt) at (1.2,2) [circle,draw,fill=black, inner sep=2pt] {};
\node[label={[label distance=-3 pt,font=\tiny]right:$a_{i+k}$}] (po9nt) at (3.6,2) [circle,draw,fill=black, inner sep=2pt] {};
\node[label={[label distance=-3 pt,font=\tiny]right:$b_{i+2k}$}] (po10nt) at (6,2) [circle,draw,fill=black, inner sep=2pt] {};
\node[label={[label distance=-6 pt,font=\tiny]175:$b_{i-3k}$}] (po11nt) at (-6.6,3) [circle,draw,fill=black, inner sep=2pt] {};
\node[label={[label distance=-4 pt,font=\tiny]left:$a_{i-2k}$}] (po12nt) at (-5.4,3) [circle,draw,fill=black, inner sep=2pt] {};
\node[label={[label distance=-4 pt,font=\tiny]left:$a_{i-k-1}$}] (po13nt) at (-4.2,3) [circle,draw,fill=black, inner sep=2pt] {};
\node[label={[label distance=-4 pt,font=\tiny]left:$a_{i-k+1}$}] (po14nt) at (-3,3) [circle,draw,fill=black, inner sep=2pt] {};
\node[label={[label distance=-4 pt,font=\tiny]left:$b_{i-1}$}] (po15nt) at (-1.8,3) [circle,draw,fill=black, inner sep=2pt] {};
\node[label={[label distance=-4 pt,font=\tiny]left:$a_{i-2}$}] (po16nt) at (-.6,3) [circle,draw,fill=black, inner sep=2pt] {};
\node[label={[label distance=-3 pt,font=\tiny]right:$a_{i+2}$}] (po17nt) at (0.6,3) [circle,draw,fill=black, inner sep=2pt] {};
\node[label={[label distance=-3 pt,font=\tiny]right:$b_{i+1}$}] (po18nt) at (1.8,3) [circle,draw,fill=black, inner sep=2pt] {};
\node[label={[label distance=-3 pt,font=\tiny]right:$a_{i+k-1}$}] (po19nt) at (3,3) [circle,draw,fill=black, inner sep=2pt] {};
\node[label={[label distance=-3 pt,font=\tiny]right:$a_{i+k+1}$}] (po20nt) at (4.2,3) [circle,draw,fill=black, inner sep=2pt] {};
\node[label={[label distance=-3 pt,font=\tiny]right:$a_{i+2k}$}] (po21nt) at (5.4,3) [circle,draw,fill=black, inner sep=2pt] {};
\node[label={[label distance=-5 pt,font=\tiny]5:$b_{i+3k}$}] (po22nt) at (6.6,3) [circle,draw,fill=black, inner sep=2pt] {};
\node[label={[font=\tiny,rotate=90]right:$a_{i-3k}$}] (po24nt) at (-6.3,4) [circle,draw,fill=black, inner sep=2pt] {};
\node[label={[font=\tiny,rotate=90]right:$a_{i-2k-1}$}] (po25nt) at (-5.7,4) [circle,draw,fill=black, inner sep=2pt] {};
\node[label={[font=\tiny,rotate=90]right:$a_{i-2k+1}$}] (po26nt) at (-5.1,4) [circle,draw,fill=black, inner sep=2pt] {};
\node[label={[font=\tiny,rotate=90]right:$b_{i-k-1}$}] (po27nt) at (-4.5,4) [diamond,draw,fill=black!50, inner sep=1.75pt, minimum width=1.75pt] {};
\node[label={[font=\tiny,rotate=90]right:$a_{i-k-2}$}] (po28nt) at (-3.9,4) [circle,draw,fill=black, inner sep=2pt] {};
\node[label={[font=\tiny,rotate=90]right:$b_{i-k+1}$}] (po29nt) at (-3.3,4) [star,star points=9,star point ratio=2,draw,fill=black!25, inner sep=1.5pt, minimum width=1.5pt] {};
\node[label={[font=\tiny,rotate=90]right:$a_{i-k+2}$}] (po30nt) at (-2.7,4) [circle,draw,fill=black, inner sep=2pt] {};
\node[label={[font=\tiny,rotate=90]right:$b_{i-k-1}$}] (po31nt) at (-2.1,4) [diamond,draw,fill=black!50, inner sep=1.75pt, minimum width=1.75pt] {};
\node[label={[font=\tiny,rotate=90]right:$b_{i+k-1}$}] (po32nt) at (-1.5,4) [star,star points=5,star point ratio=2,draw,fill=black!75, inner sep=1.5pt, minimum width=1.5pt] {};
\node[label={[font=\tiny,rotate=90]right:$b_{i-2}$}] (po33nt) at (-.9,4) [circle,draw,fill=black, inner sep=2pt] {};
\node[label={[font=\tiny,rotate=90]right:$a_{i-3}$}] (po34nt) at (-.3,4) [circle,draw,fill=black, inner sep=2pt] {};
\node[label={[font=\tiny,rotate=90]right:$a_{i+3}$}] (po35nt) at (.3,4) [circle,draw,fill=black, inner sep=2pt] {};
\node[label={[font=\tiny,rotate=90]right:$b_{i+2}$}] (po36nt) at (.9,4) [circle,draw,fill=black, inner sep=2pt] {};
\node[label={[font=\tiny,rotate=90]right:$b_{i-k+1}$}] (po37nt) at (1.5,4) [star,star points=9,star point ratio=2,draw,fill=black!25, inner sep=1.5pt, minimum width=1.5pt] {};
\node[label={[font=\tiny,rotate=90]right:$b_{i+k+1}$}] (po38nt) at (2.1,4) [regular polygon,regular polygon sides=5,draw,fill=white, inner sep=2pt, minimum width=2pt] {};
\node[label={[font=\tiny,rotate=90]right:$a_{i+k-2}$}] (po39nt) at (2.7,4) [circle,draw,fill=black, inner sep=2pt] {};
\node[label={[font=\tiny,rotate=90]right:$b_{i+k-1}$}] (po40nt) at (3.3,4) [star,star points=5,star point ratio=2,draw,fill=black!75, inner sep=1.5pt, minimum width=1.5pt] {};
\node[label={[font=\tiny,rotate=90]right:$a_{i+k+2}$}] (po41nt) at (3.9,4) [circle,draw,fill=black, inner sep=2pt] {};
\node[label={[font=\tiny,rotate=90]right:$b_{i+k+1}$}] (po42nt) at (4.5,4) [regular polygon,regular polygon sides=5,draw,fill=white, inner sep=2pt, minimum width=2pt] {};
\node[label={[font=\tiny,rotate=90]right:$a_{i+2k-1}$}] (po43nt) at (5.1,4) [circle,draw,fill=black, inner sep=2pt] {};
\node[label={[font=\tiny,rotate=90]right:$a_{i+2k+1}$}] (po44nt) at (5.7,4) [circle,draw,fill=black, inner sep=2pt] {};
\node[label={[font=\tiny,rotate=90]right:$a_{i+3k}$}] (po45nt) at (6.3,4) [circle,draw,fill=black, inner sep=2pt] {};
\draw (po1nt)--(po2nt)--(po5nt)--(po11nt);
\draw (po11nt)--(po24nt);
\draw (po5nt)--(po12nt)--(po25nt);
\draw (po12nt)--(po26nt);
\draw (po2nt)--(po6nt)--(po13nt)--(po27nt);
\draw (po13nt)--(po28nt);
\draw (po6nt)--(po14nt)--(po29nt);
\draw (po14nt)--(po30nt);
\draw (po1nt)--(po3nt)--(po7nt)--(po15nt)--(po31nt);
\draw (po15nt)--(po32nt);
\draw (po7nt)--(po16nt)--(po33nt);
\draw (po16nt)--(po34nt);
\draw (po3nt)--(po8nt)--(po17nt)--(po35nt);
\draw (po17nt)--(po36nt);
\draw (po8nt)--(po18nt)--(po37nt);
\draw (po18nt)--(po38nt);
\draw (po1nt)--(po4nt)--(po9nt)--(po19nt)--(po39nt);
\draw (po19nt)--(po40nt);
\draw (po9nt)--(po20nt)--(po41nt);
\draw (po20nt)--(po42nt);
\draw (po4nt)--(po10nt)--(po21nt)--(po43nt);
\draw (po21nt)--(po44nt);
\draw (po10nt)--(po22nt)--(po45nt);
\draw [dotted, line width = 1 pt] (po22nt)--(8,3);
\draw [dotted, line width = 1 pt] (po11nt)--(-8,3);

\end{tikzpicture}

\end{figure}

\section{Main result}\label{main}

We begin by defining what it means for a robber to be trapped.

\begin{defn}
The cops have \emph{trapped} the robber if there is a cop on or adjacent to each $v_i$ ($i \in \{1,2,3\}$), where $v_1$, $v_2$, and $v_3$ are the neighbours of the vertex the robber is on (no matter whose turn it is), or if at least one cop is adjacent to the robber on the cops' turn.
\end{defn}

Observe that if the cops have trapped the robber, then the robber will be caught on the cops' next move if the robber moves, or within the next two cop moves if the robber passes, and this is the only configuration for which this is true.

We now introduce the labelling system used throughout the rest of this paper and all the proofs within. We assume throughout that the game is being played with $3$ cops, $C_1$, $C_2$, and $C_3$. We will use $r$ to denote the vertex the robber starts on. Label the three vertices adjacent to $r$ with $v_1$, $v_2$, and $v_3$. When we mention a ``branch" from $v_i$ ($i \in \{1,2,3\}$), we are referring to $v_i$ and all of the other $7$ vertices along any paths from the robber's vertex to the vertices at distance $4$ from the robber, using only a fixed one of $v_i$'s neighbours (excluding $r$ from our count). All of this is illustrated in Figure~\ref{fig:branches}. 

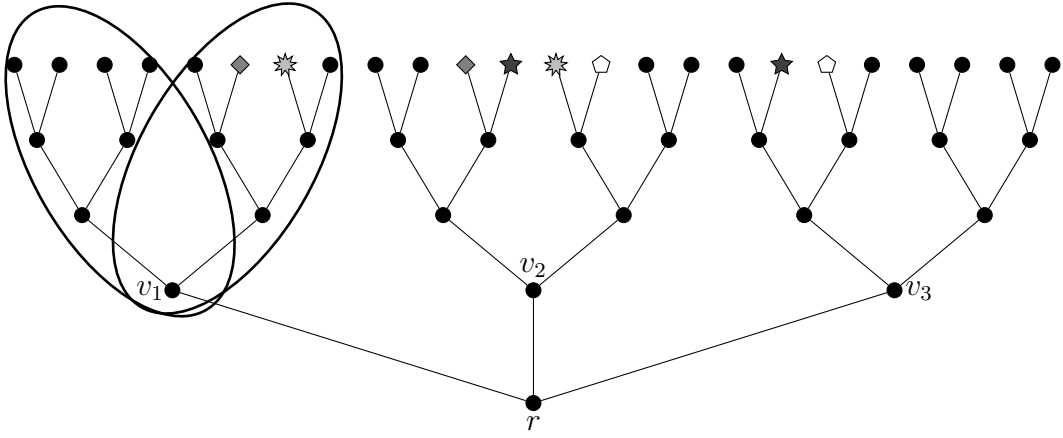
\begin{figure}\caption{This figure shows our basic notation. In the illustration, $r$ is a vertex in $A$. The two branches from $v_1$ are circled.}
\label{fig:branches}

\begin{tikzpicture}

\node[label={[label distance=-2 pt]below:$r$}] (po1nt) at (0,-.5) [circle,draw,fill=black, inner sep=2pt] {};
\node[label={[label distance=-4 pt]left:$v_1$}] (po2nt) at (-4.8,1) [circle,draw,fill=black, inner sep=2pt] {};
\node[label={[label distance=-2 pt]above:$v_2$}] (po3nt) at (0,1) [circle,draw,fill=black, inner sep=2pt] {};
\node[label={[label distance=-3 pt]right:$v_3$}] (po4nt) at (4.8,1) [circle,draw,fill=black, inner sep=2pt] {};
\node[label={[label distance=-4 pt,font=\tiny]left:}] (po5nt) at (-6,2) [circle,draw,fill=black, inner sep=2pt] {};
\node[label={[label distance=-4 pt,font=\tiny]left:}] (po6nt) at (-3.6,2) [circle,draw,fill=black, inner sep=2pt] {};
\node[label={[label distance=-4 pt,font=\tiny]left:}] (po7nt) at (-1.2,2) [circle,draw,fill=black, inner sep=2pt] {};
\node[label={[label distance=-3 pt,font=\tiny]right:}] (po8nt) at (1.2,2) [circle,draw,fill=black, inner sep=2pt] {};
\node[label={[label distance=-3 pt,font=\tiny]right:}] (po9nt) at (3.6,2) [circle,draw,fill=black, inner sep=2pt] {};
\node[label={[label distance=-3 pt,font=\tiny]right:}] (po10nt) at (6,2) [circle,draw,fill=black, inner sep=2pt] {};
\node[label={[label distance=-4 pt,font=\tiny]left:}] (po11nt) at (-6.6,3) [circle,draw,fill=black, inner sep=2pt] {};
\node[label={[label distance=-4 pt,font=\tiny]left:}] (po12nt) at (-5.4,3) [circle,draw,fill=black, inner sep=2pt] {};
\node[label={[label distance=-4 pt,font=\tiny]left:}] (po13nt) at (-4.2,3) [circle,draw,fill=black, inner sep=2pt] {};
\node[label={[label distance=-4 pt,font=\tiny]left:}] (po14nt) at (-3,3) [circle,draw,fill=black, inner sep=2pt] {};
\node[label={[label distance=-4 pt,font=\tiny]left:}] (po15nt) at (-1.8,3) [circle,draw,fill=black, inner sep=2pt] {};
\node[label={[label distance=-4 pt,font=\tiny]left:}] (po16nt) at (-.6,3) [circle,draw,fill=black, inner sep=2pt] {};
\node[label={[label distance=-3 pt,font=\tiny]right:}] (po17nt) at (0.6,3) [circle,draw,fill=black, inner sep=2pt] {};
\node[label={[label distance=-3 pt,font=\tiny]right:}] (po18nt) at (1.8,3) [circle,draw,fill=black, inner sep=2pt] {};
\node[label={[label distance=-3 pt,font=\tiny]right:}] (po19nt) at (3,3) [circle,draw,fill=black, inner sep=2pt] {};
\node[label={[label distance=-3 pt,font=\tiny]right:}] (po20nt) at (4.2,3) [circle,draw,fill=black, inner sep=2pt] {};
\node[label={[label distance=-3 pt,font=\tiny]right:}] (po21nt) at (5.4,3) [circle,draw,fill=black, inner sep=2pt] {};
\node[label={[label distance=-3 pt,font=\tiny]right:}] (po22nt) at (6.6,3) [circle,draw,fill=black, inner sep=2pt] {};
\node[label={[font=\tiny,rotate=90]right:}] (po23nt) at (-6.9,4) [circle,draw,fill=black, inner sep=2pt] {};
\node[label={[font=\tiny,rotate=90]right:}] (po24nt) at (-6.3,4) [circle,draw,fill=black, inner sep=2pt] {};
\node[label={[font=\tiny,rotate=90]right:}] (po25nt) at (-5.7,4) [circle,draw,fill=black, inner sep=2pt] {};
\node[label={[font=\tiny,rotate=90]right:}] (po26nt) at (-5.1,4) [circle,draw,fill=black, inner sep=2pt] {};
\node[label={[font=\tiny,rotate=90]right:}] (po27nt) at (-4.5,4) [circle,draw,fill=black, inner sep=2pt] {};
\node[label={[font=\tiny,rotate=90]right:}] (po28nt) at (-3.9,4) [diamond,draw,fill=black!50, inner sep=1.75pt, minimum width=1.75pt] {};
\node[label={[font=\tiny,rotate=90]right:}] (po29nt) at (-3.3,4) [star,star points=9,star point ratio=2,draw,fill=black!25, inner sep=1.5pt, minimum width=1.5pt] {};
\node[label={[font=\tiny,rotate=90]right:}] (po30nt) at (-2.7,4) [circle,draw,fill=black, inner sep=2pt] {};
\node[label={[font=\tiny,rotate=90]right:}] (po31nt) at (-2.1,4) [circle,draw,fill=black, inner sep=2pt] {};
\node[label={[font=\tiny,rotate=90]right:}] (po32nt) at (-1.5,4) [circle,draw,fill=black, inner sep=2pt] {};
\node[label={[font=\tiny,rotate=90]right:}] (po33nt) at (-.9,4) [diamond,draw,fill=black!50, inner sep=1.75pt, minimum width=1.75pt] {};
\node[label={[font=\tiny,rotate=90]right:}] (po34nt) at (-.3,4) [star,star points=5,star point ratio=2,draw,fill=black!75, inner sep=1.5pt, minimum width=1.5pt] {};
\node[label={[font=\tiny,rotate=90]right:}] (po35nt) at (.3,4) [star,star points=9,star point ratio=2,draw,fill=black!25, inner sep=1.5pt, minimum width=1.5pt] {};
\node[label={[font=\tiny,rotate=90]right:}] (po36nt) at (.9,4) [regular polygon,regular polygon sides=5,draw,fill=white, inner sep=2pt, minimum width=2pt] {};
\node[label={[font=\tiny,rotate=90]right:}] (po37nt) at (1.5,4) [circle,draw,fill=black, inner sep=2pt] {};
\node[label={[font=\tiny,rotate=90]right:}] (po38nt) at (2.1,4) [circle,draw,fill=black, inner sep=2pt] {};
\node[label={[font=\tiny,rotate=90]right:}] (po39nt) at (2.7,4) [circle,draw,fill=black, inner sep=2pt] {};
\node[label={[font=\tiny,rotate=90]right:}] (po40nt) at (3.3,4) [star,star points=5,star point ratio=2,draw,fill=black!75, inner sep=1.5pt, minimum width=1.5pt] {};
\node[label={[font=\tiny,rotate=90]right:}] (po41nt) at (3.9,4) [regular polygon,regular polygon sides=5,draw,fill=white, inner sep=2pt, minimum width=2pt] {};
\node[label={[font=\tiny,rotate=90]right:}] (po42nt) at (4.5,4) [circle,draw,fill=black, inner sep=2pt] {};
\node[label={[font=\tiny,rotate=90]right:}] (po43nt) at (5.1,4) [circle,draw,fill=black, inner sep=2pt] {};
\node[label={[font=\tiny,rotate=90]right:}] (po44nt) at (5.7,4) [circle,draw,fill=black, inner sep=2pt] {};
\node[label={[font=\tiny,rotate=90]right:}] (po45nt) at (6.3,4) [circle,draw,fill=black, inner sep=2pt] {};
\node[label={[font=\tiny,rotate=90]right:}] (po46nt) at (6.9,4) [circle,draw,fill=black, inner sep=2pt] {};
\draw (po1nt)--(po2nt)--(po5nt)--(po11nt)--(po23nt);
\draw (po11nt)--(po24nt);
\draw (po5nt)--(po12nt)--(po25nt);
\draw (po12nt)--(po26nt);
\draw (po2nt)--(po6nt)--(po13nt)--(po27nt);
\draw (po13nt)--(po28nt);
\draw (po6nt)--(po14nt)--(po29nt);
\draw (po14nt)--(po30nt);
\draw (po1nt)--(po3nt)--(po7nt)--(po15nt)--(po31nt);
\draw (po15nt)--(po32nt);
\draw (po7nt)--(po16nt)--(po33nt);
\draw (po16nt)--(po34nt);
\draw (po3nt)--(po8nt)--(po17nt)--(po35nt);
\draw (po17nt)--(po36nt);
\draw (po8nt)--(po18nt)--(po37nt);
\draw (po18nt)--(po38nt);
\draw (po1nt)--(po4nt)--(po9nt)--(po19nt)--(po39nt);
\draw (po19nt)--(po40nt);
\draw (po9nt)--(po20nt)--(po41nt);
\draw (po20nt)--(po42nt);
\draw (po4nt)--(po10nt)--(po21nt)--(po43nt);
\draw (po21nt)--(po44nt);
\draw (po10nt)--(po22nt)--(po45nt);
\draw (po22nt)--(po46nt);
\draw[rotate=-30,line width=1pt] (-4.9,.35) ellipse (33pt and 65pt);
\draw[rotate=30,line width=1pt] (-3.4,5.1) ellipse (33pt and 65pt);

\end{tikzpicture}

\end{figure}

We now outline the three cases included in \cite{MRS} as the only possible sets of configurations for the robber to be in, relative to the positions of the three cops. They are as follows:

\begin{itemize}
\item \begin{case}\label{case:clear-branch} For at least one vertex $v_i$ ($i \in \{1,2,3\}$), $v_i$ has no cop on one of its branches, and no cop within distance $2$ of the robber on the other branch. \end{case}
\item \begin{case}\label{case:cops-far} For every cop $C_i$ ($i \in \{1,2,3\}$), $C_i$ is not within distance $2$ of the robber.\end{case}
\item \begin{case}\label{case:case3} For every vertex $v_i$ ($i \in \{1,2,3\}$), $v_i$ either has at least one cop on its branches who is at distance $2$ or less from the robber (which is true for at least one vertex), or has at least one cop on each of its branches. \end{case}
\end{itemize}

Now that we have stated our three cases, we will explain why these cases cover all of the possibilities. Suppose we have a set-up that does not conform to Case~\ref{case:cops-far}. This means that there is at least one cop who is at distance $2$ or less from the robber. This cop is on $v_i$ ($i \in \{1,2,3\}$), or one of its adjacent vertices (other than $r$). If the situation also does not satisfy the assumptions of Case~\ref{case:case3}, then there must be at least one vertex (say $v_j$, where $j \in \{1,2,3\}$ and $j \neq i$) that has no cop on one of its branches, and the closest cop through $v_j$ is at distance $3$ or more from the robber. This means that Case~\ref{case:clear-branch} applies to vertex $v_j$.

We will now prove our first lemma, which will be used in our main lemma.

\begin{lem}\label{lem:not-case-3}
Suppose that $n=7k/i$ where $i \in \{1,2,3\}$, and either $n \ge 42$ or $(n,k) \in \{(28,8),(35,10),(35,15)\}$. If we play cops and robbers with $3$ cops on $GP(n,k)$, then the only way for the configuration of the robber with respect to the cops to fall into Case~\ref{case:case3} is if the robber is trapped.
\end{lem}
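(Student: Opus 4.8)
The plan is to run a counting argument on the three cops. Being in Case~\ref{case:case3} means that each neighbour $v_i$ of $r$ ($i\in\{1,2,3\}$) satisfies at least one of: (A) some cop lies on $v_i$ or on a neighbour of $v_i$ other than $r$ (equivalently, a cop lies on a branch of $v_i$ within distance $2$ of $r$); or (B) each of the two branches of $v_i$ contains a cop. Moreover (A) holds for at least one $v_i$. Let $S$ be the set of neighbours $v_i$ of $r$ for which (A) holds, so $1\le|S|\le 3$, and assume no cop is on $r$ (otherwise the robber is already captured). If $|S|=3$, then there is a cop on or adjacent to each of $v_1,v_2,v_3$, which is exactly the definition of the robber being trapped; so the whole task is to show that $|S|=1$ and $|S|=2$ cannot occur.

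I would first record two structural facts, both consequences of $GP(n,k)$ having girth $7$ (so it contains no cycle of length at most $6$). \emph{Fact 1:} $v_1,v_2,v_3$ are pairwise non-adjacent and have no common neighbour other than $r$ (otherwise one gets a triangle or a $4$-cycle). Consequently a cop witnessing (A) for $v_i$ lies at distance $1$ or $2$ from $r$, that distance is realised only by a path through $v_i$, and hence such a cop lies on no branch of $v_j$ for any $j\ne i$. \emph{Fact 2:} for a single $v_i$, its two branches meet only in $v_i$, since a common vertex $x\ne v_i$ would yield two distinct paths of length at most $3$ from $v_i$ to $x$, whose union contains a cycle of length at most $6$. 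Given these, the case $|S|=2$ is quick: the two members of $S$ are covered by two \emph{distinct} cops (by Fact 1 one cop cannot cover two of them without sitting on $r$), so only one cop is left for the third neighbour, which by (B) would need a cop on each of its two disjoint branches---impossible.

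The case $|S|=1$ is the heart of the argument and the step I expect to be the main obstacle. Say $S=\{v_a\}$, covered by one cop, and let $v_b,v_c$ be the other two neighbours of $r$, each satisfying (B). By Fact 1 the cop covering $v_a$ is useless for $v_b$ and $v_c$, so the two remaining cops must, between them, hit each of the two branches of $v_b$ and each of the two branches of $v_c$; since the two branches of $v_b$ are disjoint, and likewise those of $v_c$ (Fact 2), each of these two cops must sit on a vertex lying simultaneously on a branch of $v_b$ and on a branch of $v_c$. At this point one needs to know precisely which vertices lie on branches of two different neighbours of $r$---this is exactly the information in Figures~\ref{figA} and~\ref{figB}, and it is here that the standing hypothesis ($n\ge 42$, or $(n,k)\in\{(28,8),(35,10),(35,15)\}$) is used, to guarantee there are no further coincidences among the vertices within distance $4$ of $r$. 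Reading off the figures: such shared vertices occur only at distance $4$, only between the ``spoke'' neighbour of $r$ and one of the two ``rim'' neighbours (never between the two rim neighbours), there are exactly two for each such spoke--rim pair, and---crucially---those two shared vertices both lie on a \emph{single} one of the two branches of the rim neighbour. Hence: if $\{v_b,v_c\}$ is the pair of rim neighbours, no vertex lies on a branch of each, so the two cops cannot be placed; and if $\{v_b,v_c\}$ contains the spoke neighbour, the two cops are forced onto the two shared vertices, which both lie on one branch of the rim neighbour in $\{v_b,v_c\}$, leaving its other branch uncovered---each contradicting (B). (When $r\in B$ one should also check that the extra edge between $b_{i+3k}$ and $b_{i-3k}$ shown in Figure~\ref{figB} creates no new shared vertex between the two rim neighbours; it joins two distinct vertices at distance $3$, so it does not.) Therefore $|S|=1$ is also impossible.

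It follows that $|S|=3$, so there is a cop on or adjacent to each of $v_1,v_2,v_3$ and the robber is trapped. The only things I would defer are the two ``careful calculations'' already flagged in the text---that Figures~\ref{figA} and~\ref{figB} are correct for the given $(n,k)$, and that $v_1,v_2,v_3$ have no common neighbour other than $r$---which I would simply cite.
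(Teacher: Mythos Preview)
Your approach is the same as the paper's: split on the number $|S|$ of neighbours of $r$ that have a cop within distance $2$ on one of their branches, and show $|S|\in\{1,2\}$ is impossible. In fact the paper's argument is briefer than yours---in its $|S|=1$ case it simply \emph{asserts} that two cops cannot cover all four branches of the remaining two neighbours, whereas you actually justify this by analysing which distance-$4$ vertices are shared between branches, as read off Figures~\ref{figA} and~\ref{figB}.

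One correction is needed, though. Your parenthetical about $r\in B$ is wrong: the extra edge between $b_{i+3k}$ and $b_{i-3k}$ \emph{does} produce shared vertices between the two ``rim'' neighbours $b_{i-k}$ and $b_{i+k}$. The vertex $b_{i-3k}$ lies on the $b_{i-2k}$-branch of $b_{i-k}$ (at level $3$) and also on the $b_{i+2k}$-branch of $b_{i+k}$ (as the level-$4$ leaf $b_{i+4k}$, using $7k\equiv 0$); symmetrically for $b_{i+3k}$. So ``no vertex lies on a branch of each'' is false here. Your conclusion survives for exactly the same reason as in the spoke--rim case: both of these shared vertices lie on the \emph{same} branch of $b_{i-k}$ (through $b_{i-2k}$) and on the same branch of $b_{i+k}$ (through $b_{i+2k}$), so two cops placed at them still leave the $a_{i-k}$-branch of $b_{i-k}$ and the $a_{i+k}$-branch of $b_{i+k}$ uncovered. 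With that one-line fix, your argument is complete and somewhat more fully justified than the paper's own.
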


\begin{proof}
To prove this lemma, we will use a case analysis on the possible configurations within Case~\ref{case:case3}, and show why each is impossible unless the robber is trapped. Our proof holds regardless of whether the robber is on a vertex in $A$ or a vertex in $B$. We assume that both the cop and robber players use their moves optimally.

We now move to our case analysis.

\setcounter{case}{0}
\renewcommand{\thecase}{\Alph{case}}
\begin{case}\label{case:one-v} \textbf{\mathversion{bold} For exactly one vertex $v_i$ ($i \in \{1,2,3\}$), $v_i$ satisfies the conditions of Case~\ref{case:case3} by having at least one cop on its branches who is at distance $2$ or less from the robber.}\end{case}

In order for our situation to fall under Case~\ref{case:one-v}, $v_i$ satisfies the conditions of Case~\ref{case:case3} already, but there are two other vertices, $v_j$ and $v_k$ (where $\{i,j,k\}=\{1,2,3\}$), that still need to satisfy these conditions. This means that  each of $v_j$ and $v_k$ requires both of its branches to be covered by cops. There is no way for two cops to cover all four branches, unless those two cops are directly on $v_j$ and $v_k$, which does not fall under Case~\ref{case:one-v}. This means that it is impossible for a configuration of cops and robber on our graphs to fall under Case~\ref{case:one-v}. $\blacksquare$

\begin{case}\label{case:two-v}\textbf{\mathversion{bold}For exactly two vertices $v_i$ and $v_j$ ($i,j \in \{1,2,3\}$), $v_i$ and $v_j$ satisfy the conditions of Case~\ref{case:case3} by each having at least one cop on its branches who is at distance $2$ or less from the robber.}\end{case}

Assuming our scenario meets the parameters of Case~\ref{case:two-v}, there is only one vertex $v_k$ (where $\{i,j,k\}=\{1,2,3\}$) which does not yet satisfy the conditions of Case~\ref{case:case3}. This final vertex is required to have at least one cop on each of its branches. Since two of the three cops have already been placed, we only have one cop left. No $v_k$ has a vertex that is a part of both of its branches, other than $v_k$ itself, which cannot have a cop on it because of the conditions of Case~\ref{case:two-v}. $\blacksquare$

\begin{case}\label{case:three-v}\textbf{\mathversion{bold} For every vertex $v_i$ ($i \in \{1,2,3\}$), $v_i$ satisfies the conditions of Case~\ref{case:case3} by having at least one cop on its branches who is at distance $2$ or less from the robber.}\end{case}

In order for the parameters of Case~\ref{case:three-v} to be met, there must be a cop on each of $v_1$, $v_2$, and $v_3$, or the adjacent vertices (excluding $r$, of course). In this case, the robber is trapped. $\blacksquare$
\end{proof}

We now prove our main lemma, which will be used inductively in the proof of our theorem.

\begin{lem}\label{lem:not-2-trapped}
Suppose that $n=7k/i$ where $i \in \{1,2,3\}$, and either $n \ge 42$ or $(n,k) \in \{(28,8),(35,10),(35,15)\}$. If we play cops and robbers on $GP(n,k)$ with three cops, then unless the robber starts their turn trapped, they always have a legal move so that they cannot be trapped by the cops on the cops' turn.
\end{lem}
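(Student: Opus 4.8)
\emph{Plan of proof.} The idea is to feed Lemma~\ref{lem:not-case-3} into the trichotomy of cases established above. Since the robber does not begin their turn trapped, Lemma~\ref{lem:not-case-3} excludes Case~\ref{case:case3}, so (the three cases being exhaustive) the configuration satisfies Case~\ref{case:clear-branch} or Case~\ref{case:cops-far}, and I would handle these two cases separately. In each case I will exhibit an explicit legal robber move to a vertex $w\in\{r,v_1,v_2,v_3\}$ and verify two things: (i) no cop lies on $w$ or on a neighbour of $w$, so the move is legal and the robber cannot be captured on the cops' immediate reply; and (ii) some neighbour $w'$ of $w$ has no cop within distance $2$ of it. Point (ii) is the crux: a single cop move decreases each distance by at most $1$, so if no cop starts within distance $2$ of $w'$, then after the cops' turn no cop is on or adjacent to $w'$; hence $w'$ is not covered and, by the definition of trapped, the robber at $w$ is not trapped on the cops' turn. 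Distances and adjacencies near $r$ will be read off from Figures~\ref{figA} and~\ref{figB}, which are valid under the standing hypothesis $n\ge 42$ or $(n,k)\in\{(28,8),(35,10),(35,15)\}$.

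For Case~\ref{case:clear-branch}, let $v_i$ be a vertex with a clear branch, rooted at $u_1$ say, and whose other branch, rooted at $u_2$, contains no cop within distance $2$ of $r$. The robber moves to $w=v_i$. The case hypotheses immediately give that no cop lies on $v_i$ or on any of its neighbours $r,u_1,u_2$, so (i) holds. For (ii) I take $w'=u_1$ and check that every vertex within distance $2$ of $u_1$ is cop‑free: these are $u_1$ and the two distance‑$3$ vertices of its branch (cop‑free since that branch is clear), the vertices $v_i$ and $r$ (trivially), and $u_2$ together with the four distance‑$4$ vertices of the clear branch ($u_2$ being cop‑free since it lies on the other branch at distance $2$ from $r$). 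One must use the local picture, especially when $r\in B$, to confirm that this is the full list of vertices within distance $2$ of $u_1$; the inner $7$‑cycle present in that case introduces no new such vertex other than $u_2$, which is already accounted for.

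For Case~\ref{case:cops-far}, every cop is at distance at least $3$ from $r$, so no cop is on or adjacent to $r$ or to any $v_l$, and (i) is automatic for all four of the robber's moves; only (ii) needs checking. I split into two sub‑cases according to whether every $v_l$ has a cop at distance exactly $3$ in one of its branches. If some $v_l$ does not, the robber passes (so $w=r$) and I take $w'=v_l$: a cop at distance at least $4$ from $r$ is at distance at least $3$ from $v_l$, and a cop at distance exactly $3$ from $r$ that lies outside the branches of $v_l$ is also at distance at least $3$ from $v_l$ (a shorter route would force a cycle of length at most $6$, contradicting girth $7$), so no cop can cover $v_l$. Otherwise, since there are only three cops, the three distance‑$3$ cops are matched bijectively to the three vertices $v_l$; in particular all three cops sit at distance exactly $3$ from $r$. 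Then the robber moves to $w=v_1$ and I take $w'=r$: no cop can reach within distance $1$ of $r$ in a single move since every cop is at distance exactly $3>2$ from $r$, so $r$ stays uncovered and the robber at $v_1$ is not trapped.

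The main obstacle is the verification that cops sitting in the ``wrong'' branches genuinely cannot reach the relevant neighbourhood in Case~\ref{case:cops-far} (and, similarly, the enumeration of the distance‑$2$ ball of $u_1$ in Case~\ref{case:clear-branch}). This is exactly where the girth‑$7$ hypothesis is needed — to forbid cycles of length at most $6$ and thereby kill short detours — and it is also the place where the inner $7$‑cycle present when the robber is on a vertex of $B$ forces a few extra adjacencies that must be checked by hand; as indicated above these turn out to be harmless. All of these checks are routine given Figures~\ref{figA} and~\ref{figB}, but they carry the technical weight of the argument.
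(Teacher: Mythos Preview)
Your plan is correct and mirrors the paper's proof: eliminate Case~\ref{case:case3} via Lemma~\ref{lem:not-case-3}, in Case~\ref{case:clear-branch} move to $v_i$ and argue that the clear-branch neighbour stays uncovered, and in Case~\ref{case:cops-far} move so that $r$ stays uncovered. The sub-case split you introduce in Case~\ref{case:cops-far} is unnecessary: the paper simply has the robber move to any $v_1$ and observes that since every cop began at distance at least $3$ from $r$, no cop can reach $r$ or any of its neighbours in one step, so $r$ remains an uncovered neighbour of the robber's new position --- this is precisely your sub-case 2b argument, and it already works without the extra hypothesis that all three cops sit at distance exactly $3$, so sub-case 2a (and the passing move) can be dropped.
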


\begin{proof}
To prove this lemma, we will use a case analysis of the current positions of the cops compared to that of the robber (on the robber's turn). We will  use the techniques from \cite{MRS} to prove this lemma. We assume that the three cops have not yet trapped the robber. We also assume that both the cop and robber players use their moves optimally. We will go on to prove that unless the robber starts their turn trapped, they always have a legal move so that the robber cannot be trapped at the end of the cops' turn.

In Lemma \ref{lem:not-case-3}, we proved that Case~\ref{case:case3} cannot apply to our graphs. This means that we need only prove this lemma under the assumption that the scenario falls into Case~\ref{case:clear-branch} or Case~\ref{case:cops-far}. These proofs are below.

\setcounter{case}{0}
\begin{case} \textbf{\mathversion{bold}For at least one vertex $v_i$ ($i \in \{1,2,3\}$), $v_i$ has no cop on one of its branches, and no cop within distance $2$ of the robber on the other branch.}\end{case}

Let us assume that the vertex satisfying the requirements of this case is $v_1$ (the other cases are similar). Equally, let us assume that the left branch is the one with no cop. This situation is shown in Figure~\ref{fig:case1}. Since there is no cop within distance $2$ of the robber on vertex $v_1$, the robber can go to $v_1$ safely. No cop can catch the robber on this move, since no cop can be within distance $1$ of $v_1$ at the start of this turn. Since there are no cops on the left branch from $v_1$, there are no cops within distance $4$ of the robber in that direction before the robber's move, so even after the cops' turn there cannot be a cop within distance $2$ of the robber in that direction. Therefore, the robber is not trapped at the end of the cops' turn. $\blacksquare$
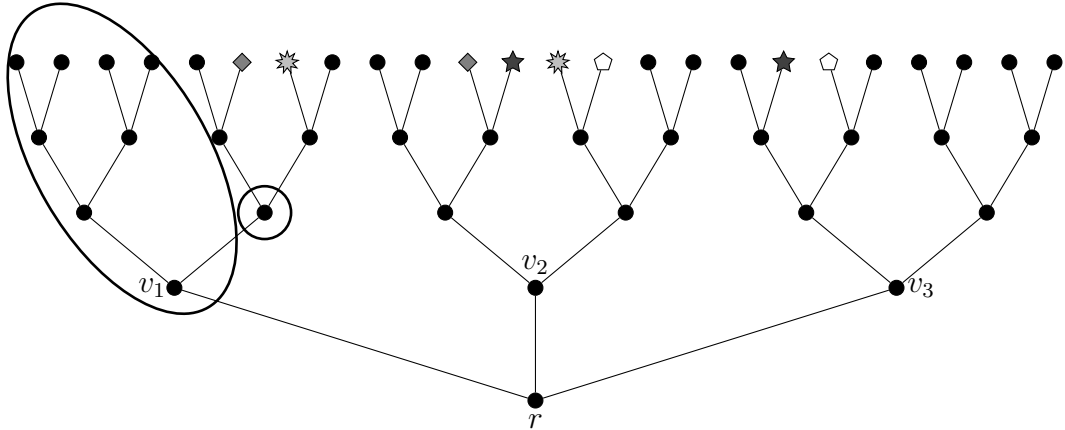
\begin{figure}\caption{The vertices that we assume do not have cops on them in Case~\ref{case:clear-branch} are circled. The illustration shows this when $r \in A$.}
\label{fig:case1}

\begin{tikzpicture}

\node[label={[label distance=-2 pt]below:$r$}] (po1nt) at (0,-.5) [circle,draw,fill=black, inner sep=2pt] {};
\node[label={[label distance=-4 pt]left:$v_1$}] (po2nt) at (-4.8,1) [circle,draw,fill=black, inner sep=2pt] {};
\node[label={[label distance=-2 pt]above:$v_2$}] (po3nt) at (0,1) [circle,draw,fill=black, inner sep=2pt] {};
\node[label={[label distance=-3 pt]right:$v_3$}] (po4nt) at (4.8,1) [circle,draw,fill=black, inner sep=2pt] {};
\node[label={[label distance=-4 pt,font=\tiny]left:}] (po5nt) at (-6,2) [circle,draw,fill=black, inner sep=2pt] {};
\node[label={[label distance=-4 pt,font=\tiny]left:}] (po6nt) at (-3.6,2) [circle,draw,fill=black, inner sep=2pt] {};
\node[label={[label distance=-4 pt,font=\tiny]left:}] (po7nt) at (-1.2,2) [circle,draw,fill=black, inner sep=2pt] {};
\node[label={[label distance=-3 pt,font=\tiny]right:}] (po8nt) at (1.2,2) [circle,draw,fill=black, inner sep=2pt] {};
\node[label={[label distance=-3 pt,font=\tiny]right:}] (po9nt) at (3.6,2) [circle,draw,fill=black, inner sep=2pt] {};
\node[label={[label distance=-3 pt,font=\tiny]right:}] (po10nt) at (6,2) [circle,draw,fill=black, inner sep=2pt] {};
\node[label={[label distance=-4 pt,font=\tiny]left:}] (po11nt) at (-6.6,3) [circle,draw,fill=black, inner sep=2pt] {};
\node[label={[label distance=-4 pt,font=\tiny]left:}] (po12nt) at (-5.4,3) [circle,draw,fill=black, inner sep=2pt] {};
\node[label={[label distance=-4 pt,font=\tiny]left:}] (po13nt) at (-4.2,3) [circle,draw,fill=black, inner sep=2pt] {};
\node[label={[label distance=-4 pt,font=\tiny]left:}] (po14nt) at (-3,3) [circle,draw,fill=black, inner sep=2pt] {};
\node[label={[label distance=-4 pt,font=\tiny]left:}] (po15nt) at (-1.8,3) [circle,draw,fill=black, inner sep=2pt] {};
\node[label={[label distance=-4 pt,font=\tiny]left:}] (po16nt) at (-.6,3) [circle,draw,fill=black, inner sep=2pt] {};
\node[label={[label distance=-3 pt,font=\tiny]right:}] (po17nt) at (0.6,3) [circle,draw,fill=black, inner sep=2pt] {};
\node[label={[label distance=-3 pt,font=\tiny]right:}] (po18nt) at (1.8,3) [circle,draw,fill=black, inner sep=2pt] {};
\node[label={[label distance=-3 pt,font=\tiny]right:}] (po19nt) at (3,3) [circle,draw,fill=black, inner sep=2pt] {};
\node[label={[label distance=-3 pt,font=\tiny]right:}] (po20nt) at (4.2,3) [circle,draw,fill=black, inner sep=2pt] {};
\node[label={[label distance=-3 pt,font=\tiny]right:}] (po21nt) at (5.4,3) [circle,draw,fill=black, inner sep=2pt] {};
\node[label={[label distance=-3 pt,font=\tiny]right:}] (po22nt) at (6.6,3) [circle,draw,fill=black, inner sep=2pt] {};
\node[label={[font=\tiny,rotate=90]right:}] (po23nt) at (-6.9,4) [circle,draw,fill=black, inner sep=2pt] {};
\node[label={[font=\tiny,rotate=90]right:}] (po24nt) at (-6.3,4) [circle,draw,fill=black, inner sep=2pt] {};
\node[label={[font=\tiny,rotate=90]right:}] (po25nt) at (-5.7,4) [circle,draw,fill=black, inner sep=2pt] {};
\node[label={[font=\tiny,rotate=90]right:}] (po26nt) at (-5.1,4) [circle,draw,fill=black, inner sep=2pt] {};
\node[label={[font=\tiny,rotate=90]right:}] (po27nt) at (-4.5,4) [circle,draw,fill=black, inner sep=2pt] {};
\node[label={[font=\tiny,rotate=90]right:}] (po28nt) at (-3.9,4) [diamond,draw,fill=black!50, inner sep=1.75pt, minimum width=1.75pt] {};
\node[label={[font=\tiny,rotate=90]right:}] (po29nt) at (-3.3,4) [star,star points=9,star point ratio=2,draw,fill=black!25, inner sep=1.5pt, minimum width=1.5pt] {};
\node[label={[font=\tiny,rotate=90]right:}] (po30nt) at (-2.7,4) [circle,draw,fill=black, inner sep=2pt] {};
\node[label={[font=\tiny,rotate=90]right:}] (po31nt) at (-2.1,4) [circle,draw,fill=black, inner sep=2pt] {};
\node[label={[font=\tiny,rotate=90]right:}] (po32nt) at (-1.5,4) [circle,draw,fill=black, inner sep=2pt] {};
\node[label={[font=\tiny,rotate=90]right:}] (po33nt) at (-.9,4) [diamond,draw,fill=black!50, inner sep=1.75pt, minimum width=1.75pt] {};
\node[label={[font=\tiny,rotate=90]right:}] (po34nt) at (-.3,4) [star,star points=5,star point ratio=2,draw,fill=black!75, inner sep=1.5pt, minimum width=1.5pt] {};
\node[label={[font=\tiny,rotate=90]right:}] (po35nt) at (.3,4) [star,star points=9,star point ratio=2,draw,fill=black!25, inner sep=1.5pt, minimum width=1.5pt] {};
\node[label={[font=\tiny,rotate=90]right:}] (po36nt) at (.9,4) [regular polygon,regular polygon sides=5,draw,fill=white, inner sep=2pt, minimum width=2pt] {};
\node[label={[font=\tiny,rotate=90]right:}] (po37nt) at (1.5,4) [circle,draw,fill=black, inner sep=2pt] {};
\node[label={[font=\tiny,rotate=90]right:}] (po38nt) at (2.1,4) [circle,draw,fill=black, inner sep=2pt] {};
\node[label={[font=\tiny,rotate=90]right:}] (po39nt) at (2.7,4) [circle,draw,fill=black, inner sep=2pt] {};
\node[label={[font=\tiny,rotate=90]right:}] (po40nt) at (3.3,4) [star,star points=5,star point ratio=2,draw,fill=black!75, inner sep=1.5pt, minimum width=1.5pt] {};
\node[label={[font=\tiny,rotate=90]right:}] (po41nt) at (3.9,4) [regular polygon,regular polygon sides=5,draw,fill=white, inner sep=2pt, minimum width=2pt] {};
\node[label={[font=\tiny,rotate=90]right:}] (po42nt) at (4.5,4) [circle,draw,fill=black, inner sep=2pt] {};
\node[label={[font=\tiny,rotate=90]right:}] (po43nt) at (5.1,4) [circle,draw,fill=black, inner sep=2pt] {};
\node[label={[font=\tiny,rotate=90]right:}] (po44nt) at (5.7,4) [circle,draw,fill=black, inner sep=2pt] {};
\node[label={[font=\tiny,rotate=90]right:}] (po45nt) at (6.3,4) [circle,draw,fill=black, inner sep=2pt] {};
\node[label={[font=\tiny,rotate=90]right:}] (po46nt) at (6.9,4) [circle,draw,fill=black, inner sep=2pt] {};
\draw (po1nt)--(po2nt)--(po5nt)--(po11nt)--(po23nt);
\draw (po11nt)--(po24nt);
\draw (po5nt)--(po12nt)--(po25nt);
\draw (po12nt)--(po26nt);
\draw (po2nt)--(po6nt)--(po13nt)--(po27nt);
\draw (po13nt)--(po28nt);
\draw (po6nt)--(po14nt)--(po29nt);
\draw (po14nt)--(po30nt);
\draw (po1nt)--(po3nt)--(po7nt)--(po15nt)--(po31nt);
\draw (po15nt)--(po32nt);
\draw (po7nt)--(po16nt)--(po33nt);
\draw (po16nt)--(po34nt);
\draw (po3nt)--(po8nt)--(po17nt)--(po35nt);
\draw (po17nt)--(po36nt);
\draw (po8nt)--(po18nt)--(po37nt);
\draw (po18nt)--(po38nt);
\draw (po1nt)--(po4nt)--(po9nt)--(po19nt)--(po39nt);
\draw (po19nt)--(po40nt);
\draw (po9nt)--(po20nt)--(po41nt);
\draw (po20nt)--(po42nt);
\draw (po4nt)--(po10nt)--(po21nt)--(po43nt);
\draw (po21nt)--(po44nt);
\draw (po10nt)--(po22nt)--(po45nt);
\draw (po22nt)--(po46nt);
\draw[rotate=30,line width=1pt] (-3.4,5.1) ellipse (33pt and 65pt);
\draw[line width=1pt] (-3.6,2) circle (10pt);
\end{tikzpicture}

\end{figure}

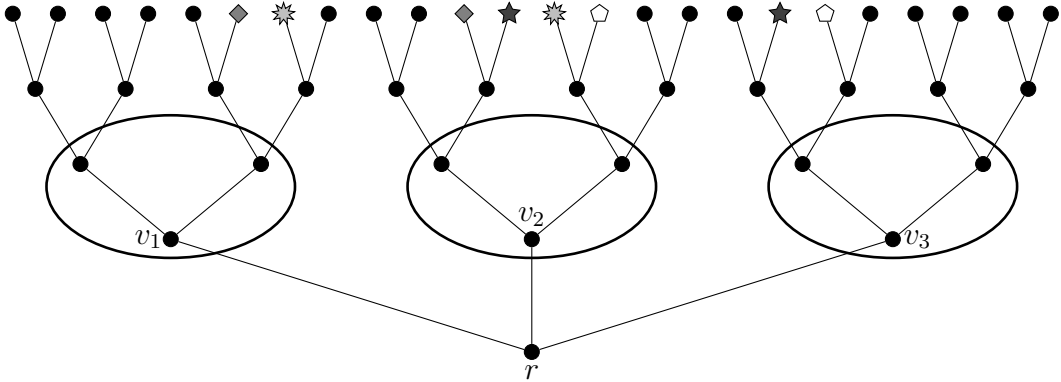
\begin{figure}\caption{The vertices that cannot have cops on them in Case~\ref{case:cops-far} are circled. The illustration shows this when  $r\in A$.}
\label{fig:case2}

\begin{tikzpicture}

\node[label={[label distance=-2 pt]below:$r$}] (po1nt) at (0,-.5) [circle,draw,fill=black, inner sep=2pt] {};
\node[label={[label distance=-4 pt]left:$v_1$}] (po2nt) at (-4.8,1) [circle,draw,fill=black, inner sep=2pt] {};
\node[label={[label distance=-2 pt]above:$v_2$}] (po3nt) at (0,1) [circle,draw,fill=black, inner sep=2pt] {};
\node[label={[label distance=-3 pt]right:$v_3$}] (po4nt) at (4.8,1) [circle,draw,fill=black, inner sep=2pt] {};
\node[label={[label distance=-4 pt,font=\tiny]left:}] (po5nt) at (-6,2) [circle,draw,fill=black, inner sep=2pt] {};
\node[label={[label distance=-4 pt,font=\tiny]left:}] (po6nt) at (-3.6,2) [circle,draw,fill=black, inner sep=2pt] {};
\node[label={[label distance=-4 pt,font=\tiny]left:}] (po7nt) at (-1.2,2) [circle,draw,fill=black, inner sep=2pt] {};
\node[label={[label distance=-3 pt,font=\tiny]right:}] (po8nt) at (1.2,2) [circle,draw,fill=black, inner sep=2pt] {};
\node[label={[label distance=-3 pt,font=\tiny]right:}] (po9nt) at (3.6,2) [circle,draw,fill=black, inner sep=2pt] {};
\node[label={[label distance=-3 pt,font=\tiny]right:}] (po10nt) at (6,2) [circle,draw,fill=black, inner sep=2pt] {};
\node[label={[label distance=-4 pt,font=\tiny]left:}] (po11nt) at (-6.6,3) [circle,draw,fill=black, inner sep=2pt] {};
\node[label={[label distance=-4 pt,font=\tiny]left:}] (po12nt) at (-5.4,3) [circle,draw,fill=black, inner sep=2pt] {};
\node[label={[label distance=-4 pt,font=\tiny]left:}] (po13nt) at (-4.2,3) [circle,draw,fill=black, inner sep=2pt] {};
\node[label={[label distance=-4 pt,font=\tiny]left:}] (po14nt) at (-3,3) [circle,draw,fill=black, inner sep=2pt] {};
\node[label={[label distance=-4 pt,font=\tiny]left:}] (po15nt) at (-1.8,3) [circle,draw,fill=black, inner sep=2pt] {};
\node[label={[label distance=-4 pt,font=\tiny]left:}] (po16nt) at (-.6,3) [circle,draw,fill=black, inner sep=2pt] {};
\node[label={[label distance=-3 pt,font=\tiny]right:}] (po17nt) at (0.6,3) [circle,draw,fill=black, inner sep=2pt] {};
\node[label={[label distance=-3 pt,font=\tiny]right:}] (po18nt) at (1.8,3) [circle,draw,fill=black, inner sep=2pt] {};
\node[label={[label distance=-3 pt,font=\tiny]right:}] (po19nt) at (3,3) [circle,draw,fill=black, inner sep=2pt] {};
\node[label={[label distance=-3 pt,font=\tiny]right:}] (po20nt) at (4.2,3) [circle,draw,fill=black, inner sep=2pt] {};
\node[label={[label distance=-3 pt,font=\tiny]right:}] (po21nt) at (5.4,3) [circle,draw,fill=black, inner sep=2pt] {};
\node[label={[label distance=-3 pt,font=\tiny]right:}] (po22nt) at (6.6,3) [circle,draw,fill=black, inner sep=2pt] {};
\node[label={[font=\tiny,rotate=90]right:}] (po23nt) at (-6.9,4) [circle,draw,fill=black, inner sep=2pt] {};
\node[label={[font=\tiny,rotate=90]right:}] (po24nt) at (-6.3,4) [circle,draw,fill=black, inner sep=2pt] {};
\node[label={[font=\tiny,rotate=90]right:}] (po25nt) at (-5.7,4) [circle,draw,fill=black, inner sep=2pt] {};
\node[label={[font=\tiny,rotate=90]right:}] (po26nt) at (-5.1,4) [circle,draw,fill=black, inner sep=2pt] {};
\node[label={[font=\tiny,rotate=90]right:}] (po27nt) at (-4.5,4) [circle,draw,fill=black, inner sep=2pt] {};
\node[label={[font=\tiny,rotate=90]right:}] (po28nt) at (-3.9,4) [diamond,draw,fill=black!50, inner sep=1.75pt, minimum width=1.75pt] {};
\node[label={[font=\tiny,rotate=90]right:}] (po29nt) at (-3.3,4) [star,star points=9,star point ratio=2,draw,fill=black!25, inner sep=1.5pt, minimum width=1.5pt] {};
\node[label={[font=\tiny,rotate=90]right:}] (po30nt) at (-2.7,4) [circle,draw,fill=black, inner sep=2pt] {};
\node[label={[font=\tiny,rotate=90]right:}] (po31nt) at (-2.1,4) [circle,draw,fill=black, inner sep=2pt] {};
\node[label={[font=\tiny,rotate=90]right:}] (po32nt) at (-1.5,4) [circle,draw,fill=black, inner sep=2pt] {};
\node[label={[font=\tiny,rotate=90]right:}] (po33nt) at (-.9,4) [diamond,draw,fill=black!50, inner sep=1.75pt, minimum width=1.75pt] {};
\node[label={[font=\tiny,rotate=90]right:}] (po34nt) at (-.3,4) [star,star points=5,star point ratio=2,draw,fill=black!75, inner sep=1.5pt, minimum width=1.5pt] {};
\node[label={[font=\tiny,rotate=90]right:}] (po35nt) at (.3,4) [star,star points=9,star point ratio=2,draw,fill=black!25, inner sep=1.5pt, minimum width=1.5pt] {};
\node[label={[font=\tiny,rotate=90]right:}] (po36nt) at (.9,4) [regular polygon,regular polygon sides=5,draw,fill=white, inner sep=2pt, minimum width=2pt] {};
\node[label={[font=\tiny,rotate=90]right:}] (po37nt) at (1.5,4) [circle,draw,fill=black, inner sep=2pt] {};
\node[label={[font=\tiny,rotate=90]right:}] (po38nt) at (2.1,4) [circle,draw,fill=black, inner sep=2pt] {};
\node[label={[font=\tiny,rotate=90]right:}] (po39nt) at (2.7,4) [circle,draw,fill=black, inner sep=2pt] {};
\node[label={[font=\tiny,rotate=90]right:}] (po40nt) at (3.3,4) [star,star points=5,star point ratio=2,draw,fill=black!75, inner sep=1.5pt, minimum width=1.5pt] {};
\node[label={[font=\tiny,rotate=90]right:}] (po41nt) at (3.9,4) [regular polygon,regular polygon sides=5,draw,fill=white, inner sep=2pt, minimum width=2pt] {};
\node[label={[font=\tiny,rotate=90]right:}] (po42nt) at (4.5,4) [circle,draw,fill=black, inner sep=2pt] {};
\node[label={[font=\tiny,rotate=90]right:}] (po43nt) at (5.1,4) [circle,draw,fill=black, inner sep=2pt] {};
\node[label={[font=\tiny,rotate=90]right:}] (po44nt) at (5.7,4) [circle,draw,fill=black, inner sep=2pt] {};
\node[label={[font=\tiny,rotate=90]right:}] (po45nt) at (6.3,4) [circle,draw,fill=black, inner sep=2pt] {};
\node[label={[font=\tiny,rotate=90]right:}] (po46nt) at (6.9,4) [circle,draw,fill=black, inner sep=2pt] {};
\draw (po1nt)--(po2nt)--(po5nt)--(po11nt)--(po23nt);
\draw (po11nt)--(po24nt);
\draw (po5nt)--(po12nt)--(po25nt);
\draw (po12nt)--(po26nt);
\draw (po2nt)--(po6nt)--(po13nt)--(po27nt);
\draw (po13nt)--(po28nt);
\draw (po6nt)--(po14nt)--(po29nt);
\draw (po14nt)--(po30nt);
\draw (po1nt)--(po3nt)--(po7nt)--(po15nt)--(po31nt);
\draw (po15nt)--(po32nt);
\draw (po7nt)--(po16nt)--(po33nt);
\draw (po16nt)--(po34nt);
\draw (po3nt)--(po8nt)--(po17nt)--(po35nt);
\draw (po17nt)--(po36nt);
\draw (po8nt)--(po18nt)--(po37nt);
\draw (po18nt)--(po38nt);
\draw (po1nt)--(po4nt)--(po9nt)--(po19nt)--(po39nt);
\draw (po19nt)--(po40nt);
\draw (po9nt)--(po20nt)--(po41nt);
\draw (po20nt)--(po42nt);
\draw (po4nt)--(po10nt)--(po21nt)--(po43nt);
\draw (po21nt)--(po44nt);
\draw (po10nt)--(po22nt)--(po45nt);
\draw (po22nt)--(po46nt);
\draw[line width=1pt] (-4.8,1.7) ellipse (47pt and 27pt);
\draw[line width=1pt] (0,1.7) ellipse (47pt and 27pt);
\draw[line width=1pt] (4.8,1.7) ellipse (47pt and 27pt);

\end{tikzpicture}

\end{figure}

\begin{case}\textbf{For every cop \mathversion{bold}$C_i$ ($i \in \{1,2,3\}$), $C_i$ is not within distance $2$ of the robber.}\end{case}

Figure~\ref{fig:case2} illustrates this case. In this scenario, the robber can move to any of the three vertices adjacent to their position ($v_1$, $v_2$, or $v_3$). Let's have the robber choose $v_1$. Since no cops were within distance $2$ of the robber at the beginning, once the robber has moved, they cannot be within distance $1$ of the robber, so cannot catch the robber. Furthermore, since no cop was on vertex $v_1$, $v_2$, $v_3$, or any of their neighbours, after the cops' move no cop is on $r$ or any of its neighbours (and $r$ is a neighbour of the robber's new position). This means that the robber is not trapped. $\blacksquare$

We have now proven that the robber cannot be trapped at the end of the cops' turn if they did not start their turn trapped.
\end{proof}

This allows us to prove our main result.

\begin{thm}
Suppose that $n=7k/i$ where $i \in \{1,2,3\}$, and $n \ge 42$ or $(n,k) \in \{(28,8),(35,10),(35,15)\}$. Then the cop number of the graph $GP(n,k)$ is $4$.
\end{thm}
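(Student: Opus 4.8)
The plan is to prove the theorem in two halves: the upper bound and the lower bound.

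For the upper bound, $c(GP(n,k)) \le 4$ is already known for all generalised Petersen graphs by Ball et al.~\cite{ball}, so I would simply cite this and move on. (Alternatively, one could exhibit an explicit winning strategy for four cops, but there is no need.)

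The substance of the theorem is the lower bound $c(GP(n,k)) \ge 4$, i.e.\ three cops cannot win. I would argue by contradiction: suppose three cops have a winning strategy on $GP(n,k)$ under our hypotheses on $(n,k)$. The robber plays as follows. On their first turn, after the cops have placed their three pieces, the robber picks a starting vertex $r$ that is not trapped; this is possible because the three cops occupy/cover only a bounded neighbourhood and $GP(n,k)$ (being $3$-regular with at least $2n \ge 84$ vertices, or one of the three small exceptional graphs, all of girth $7$) is large enough that such a vertex exists — I would include a short counting/girth argument here to justify that an untrapped vertex is available at the start. Then I apply Lemma~\ref{lem:not-2-trapped} inductively: at the start of every robber turn, the robber is not trapped (this holds initially by choice of $r$), so by the lemma the robber has a legal move after which they are still not trapped at the end of the cops' subsequent turn — which is precisely the start of the next robber turn. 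By the observation following the definition of ``trapped,'' as long as the robber is never trapped at the start of one of their turns and never has a cop land on them, the robber is never caught: a capture (or a position one cop-move away from a forced capture) can only occur from a trapped configuration. Hence the robber evades capture forever, contradicting the assumption that three cops win. Therefore $c(GP(n,k)) \ge 4$, and combined with the upper bound, $c(GP(n,k)) = 4$.

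The one genuinely non-routine point is the base case of the induction: verifying that the robber can choose an initial vertex $r$ that is not trapped, no matter where the three cops are placed. I expect this to be the main obstacle to a fully rigorous write-up, though it is not deep. A vertex $v$ is trapped by a cop placement only if the cops collectively dominate the closed neighbourhoods of all of $v$'s neighbours; three cops can dominate only $O(1)$ vertices within distance $2$, while the $n \ge 42$ hypothesis (together with the explicit check that the exceptional graphs $GP(28,8)$, $GP(35,10)$, $GP(35,15)$ are large enough, and the girth-$7$ structure shown in Figures~\ref{figA} and~\ref{figB} guaranteeing no unexpected coincidences among vertices at distance $\le 4$) ensures the graph has far more vertices than any three cops can threaten, so an untrapped $r$ exists. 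I would state this as a brief lemma or inline claim, referencing the local structure already established, rather than grinding through the vertex count in detail. Everything else is an immediate consequence of Lemma~\ref{lem:not-2-trapped} and the remark on trapping.
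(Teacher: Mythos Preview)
Your proposal is correct and follows essentially the same approach as the paper: cite \cite{ball} for the upper bound, establish the base case that the robber can choose an initial vertex that is not trapped via a counting argument (the paper makes this explicit, showing at most $13$ vertices can be trapped by three cops while the graph has at least $56$), and then apply Lemma~\ref{lem:not-2-trapped} inductively to conclude that the robber is never trapped and hence never caught. The paper's write-up of the base case is a direct count rather than a separate lemma, but otherwise the structure is identical.
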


\begin{proof}
We begin by showing that there is always a vertex the robber can choose for their first move, so that they do not begin the game trapped.

Let $w$ be an arbitrary vertex with neighbours $v_1$, $v_2$, and $v_3$. Recall that for the robber to be trapped, there must be cops on each of its neighbours or their adjacent vertices, or there must be at least one cop directly adjacent to the robber. In order to maximise the cops' chances of having trapped the robber, a cop should be placed on each of $v_1$, $v_2$, and $v_3$ or their adjacent vertices. Without loss of generality, let us assume that $C_i$ ($i \in \{1,2,3\}$) is on $v_i$ or an adjacent vertex. 

Suppose momentarily that all three cops choose to go on vertices adjacent to $w$. Under this scenario, should the robber go on any vertex adjacent to a cop, they will be trapped. This gives us $10$ vertices where the robber cannot go without being trapped (including those with cops already on them). 

Now, suppose that all three cops choose to go on vertices at distance $2$ from $w$. In this set-up, in addition to the robber being trapped if they are adjacent to a cop, they will also be trapped if they go on $w$, since there are cops adjacent to $v_1$, $v_2$, and $v_3$. Therefore, in this case, there are $13$ different vertices where the robber cannot go without being trapped. 

\begin{figure}\caption{The circles in this figure show the ideal positions for the cops at the beginning of the game, when $w \in A$.}
\label{figThm}

\begin{tikzpicture}

\node[label={[label distance=-2 pt]below:$w$}] (po1nt) at (0,-.5) [circle,draw,fill=black, inner sep=2pt] {};
\node[label={[label distance=-4 pt]left:$v_1$}] (po2nt) at (-4.8,1) [circle,draw,fill=black, inner sep=2pt] {};
\node[label={[label distance=-2 pt]above:$v_2$}] (po3nt) at (0,1) [circle,draw,fill=black, inner sep=2pt] {};
\node[label={[label distance=-3 pt]right:$v_3$}] (po4nt) at (4.8,1) [circle,draw,fill=black, inner sep=2pt] {};
\node[label={[label distance=-4 pt,font=\tiny]left:}] (po5nt) at (-6,2) [circle,draw,fill=black, inner sep=2pt] {};
\node[label={[label distance=-4 pt,font=\tiny]left:}] (po6nt) at (-3.6,2) [circle,draw,fill=black, inner sep=2pt] {};
\node[label={[label distance=-4 pt,font=\tiny]left:}] (po7nt) at (-1.2,2) [circle,draw,fill=black, inner sep=2pt] {};
\node[label={[label distance=-3 pt,font=\tiny]right:}] (po8nt) at (1.2,2) [circle,draw,fill=black, inner sep=2pt] {};
\node[label={[label distance=-3 pt,font=\tiny]right:}] (po9nt) at (3.6,2) [circle,draw,fill=black, inner sep=2pt] {};
\node[label={[label distance=-3 pt,font=\tiny]right:}] (po10nt) at (6,2) [circle,draw,fill=black, inner sep=2pt] {};
\node[label={[label distance=-4 pt,font=\tiny]left:}] (po11nt) at (-6.6,3) [circle,draw,fill=black, inner sep=2pt] {};
\node[label={[label distance=-4 pt,font=\tiny]left:}] (po12nt) at (-5.4,3) [circle,draw,fill=black, inner sep=2pt] {};
\node[label={[label distance=-4 pt,font=\tiny]left:}] (po13nt) at (-4.2,3) [circle,draw,fill=black, inner sep=2pt] {};
\node[label={[label distance=-4 pt,font=\tiny]left:}] (po14nt) at (-3,3) [circle,draw,fill=black, inner sep=2pt] {};
\node[label={[label distance=-4 pt,font=\tiny]left:}] (po15nt) at (-1.8,3) [circle,draw,fill=black, inner sep=2pt] {};
\node[label={[label distance=-4 pt,font=\tiny]left:}] (po16nt) at (-.6,3) [circle,draw,fill=black, inner sep=2pt] {};
\node[label={[label distance=-3 pt,font=\tiny]right:}] (po17nt) at (0.6,3) [circle,draw,fill=black, inner sep=2pt] {};
\node[label={[label distance=-3 pt,font=\tiny]right:}] (po18nt) at (1.8,3) [circle,draw,fill=black, inner sep=2pt] {};
\node[label={[label distance=-3 pt,font=\tiny]right:}] (po19nt) at (3,3) [circle,draw,fill=black, inner sep=2pt] {};
\node[label={[label distance=-3 pt,font=\tiny]right:}] (po20nt) at (4.2,3) [circle,draw,fill=black, inner sep=2pt] {};
\node[label={[label distance=-3 pt,font=\tiny]right:}] (po21nt) at (5.4,3) [circle,draw,fill=black, inner sep=2pt] {};
\node[label={[label distance=-3 pt,font=\tiny]right:}] (po22nt) at (6.6,3) [circle,draw,fill=black, inner sep=2pt] {};
\node[label={[font=\tiny,rotate=90]right:}] (po23nt) at (-6.9,4) [circle,draw,fill=black, inner sep=2pt] {};
\node[label={[font=\tiny,rotate=90]right:}] (po24nt) at (-6.3,4) [circle,draw,fill=black, inner sep=2pt] {};
\node[label={[font=\tiny,rotate=90]right:}] (po25nt) at (-5.7,4) [circle,draw,fill=black, inner sep=2pt] {};
\node[label={[font=\tiny,rotate=90]right:}] (po26nt) at (-5.1,4) [circle,draw,fill=black, inner sep=2pt] {};
\node[label={[font=\tiny,rotate=90]right:}] (po27nt) at (-4.5,4) [circle,draw,fill=black, inner sep=2pt] {};
\node[label={[font=\tiny,rotate=90]right:}] (po28nt) at (-3.9,4) [diamond,draw,fill=black!50, inner sep=1.75pt, minimum width=1.75pt] {};
\node[label={[font=\tiny,rotate=90]right:}] (po29nt) at (-3.3,4) [star,star points=9,star point ratio=2,draw,fill=black!25, inner sep=1.5pt, minimum width=1.5pt] {};
\node[label={[font=\tiny,rotate=90]right:}] (po30nt) at (-2.7,4) [circle,draw,fill=black, inner sep=2pt] {};
\node[label={[font=\tiny,rotate=90]right:}] (po31nt) at (-2.1,4) [circle,draw,fill=black, inner sep=2pt] {};
\node[label={[font=\tiny,rotate=90]right:}] (po32nt) at (-1.5,4) [circle,draw,fill=black, inner sep=2pt] {};
\node[label={[font=\tiny,rotate=90]right:}] (po33nt) at (-.9,4) [diamond,draw,fill=black!50, inner sep=1.75pt, minimum width=1.75pt] {};
\node[label={[font=\tiny,rotate=90]right:}] (po34nt) at (-.3,4) [star,star points=5,star point ratio=2,draw,fill=black!75, inner sep=1.5pt, minimum width=1.5pt] {};
\node[label={[font=\tiny,rotate=90]right:}] (po35nt) at (.3,4) [star,star points=9,star point ratio=2,draw,fill=black!25, inner sep=1.5pt, minimum width=1.5pt] {};
\node[label={[font=\tiny,rotate=90]right:}] (po36nt) at (.9,4) [regular polygon,regular polygon sides=5,draw,fill=white, inner sep=2pt, minimum width=2pt] {};
\node[label={[font=\tiny,rotate=90]right:}] (po37nt) at (1.5,4) [circle,draw,fill=black, inner sep=2pt] {};
\node[label={[font=\tiny,rotate=90]right:}] (po38nt) at (2.1,4) [circle,draw,fill=black, inner sep=2pt] {};
\node[label={[font=\tiny,rotate=90]right:}] (po39nt) at (2.7,4) [circle,draw,fill=black, inner sep=2pt] {};
\node[label={[font=\tiny,rotate=90]right:}] (po40nt) at (3.3,4) [star,star points=5,star point ratio=2,draw,fill=black!75, inner sep=1.5pt, minimum width=1.5pt] {};
\node[label={[font=\tiny,rotate=90]right:}] (po41nt) at (3.9,4) [regular polygon,regular polygon sides=5,draw,fill=white, inner sep=2pt, minimum width=2pt] {};
\node[label={[font=\tiny,rotate=90]right:}] (po42nt) at (4.5,4) [circle,draw,fill=black, inner sep=2pt] {};
\node[label={[font=\tiny,rotate=90]right:}] (po43nt) at (5.1,4) [circle,draw,fill=black, inner sep=2pt] {};
\node[label={[font=\tiny,rotate=90]right:}] (po44nt) at (5.7,4) [circle,draw,fill=black, inner sep=2pt] {};
\node[label={[font=\tiny,rotate=90]right:}] (po45nt) at (6.3,4) [circle,draw,fill=black, inner sep=2pt] {};
\node[label={[font=\tiny,rotate=90]right:}] (po46nt) at (6.9,4) [circle,draw,fill=black, inner sep=2pt] {};
\node (labe4) at (-4.8,2){$C_1$};
\node (labe5) at (0,2){$C_2$};
\node (labe6) at (4.8,2){$C_3$};
\draw (po1nt)--(po2nt)--(po5nt)--(po11nt)--(po23nt);
\draw (po11nt)--(po24nt);
\draw (po5nt)--(po12nt)--(po25nt);
\draw (po12nt)--(po26nt);
\draw (po2nt)--(po6nt)--(po13nt)--(po27nt);
\draw (po13nt)--(po28nt);
\draw (po6nt)--(po14nt)--(po29nt);
\draw (po14nt)--(po30nt);
\draw (po1nt)--(po3nt)--(po7nt)--(po15nt)--(po31nt);
\draw (po15nt)--(po32nt);
\draw (po7nt)--(po16nt)--(po33nt);
\draw (po16nt)--(po34nt);
\draw (po3nt)--(po8nt)--(po17nt)--(po35nt);
\draw (po17nt)--(po36nt);
\draw (po8nt)--(po18nt)--(po37nt);
\draw (po18nt)--(po38nt);
\draw (po1nt)--(po4nt)--(po9nt)--(po19nt)--(po39nt);
\draw (po19nt)--(po40nt);
\draw (po9nt)--(po20nt)--(po41nt);
\draw (po20nt)--(po42nt);
\draw (po4nt)--(po10nt)--(po21nt)--(po43nt);
\draw (po21nt)--(po44nt);
\draw (po10nt)--(po22nt)--(po45nt);
\draw (po22nt)--(po46nt);
\draw[line width=1pt] (-4.8,2) ellipse (47pt and 15pt);
\draw[line width=1pt] (0,2) ellipse (47pt and 15pt);
\draw[line width=1pt] (4.8,2) ellipse (47pt and 15pt);

\end{tikzpicture}

\end{figure}
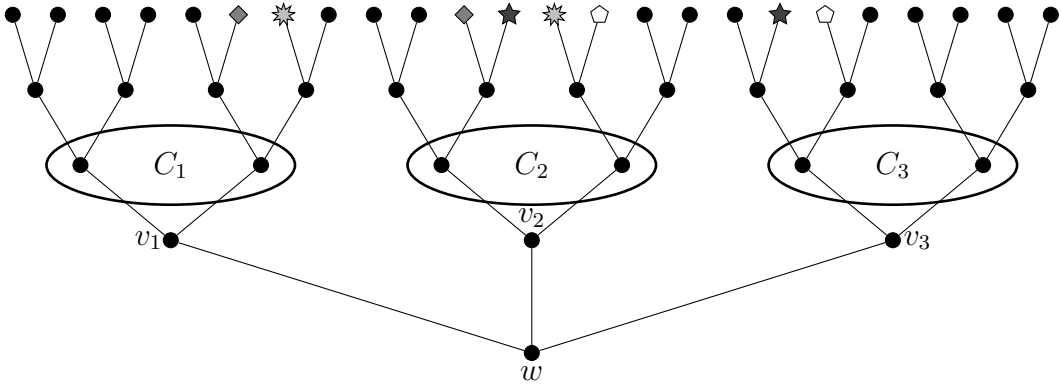

This means that our second set of positions, as shown in Figure~\ref{figThm}, are the ideal choices for the cops when there are only three of them, and if they go in the ideal positions, they can still only ensure that the robber will be trapped if placed on one of $13$ different vertices. Since $n \ge 28$, our graph has at least $56$ vertices, and the robber still has many choices that do not leave them trapped, meaning that the robber will not start trapped.

Combining this with the results of Lemma~\ref{lem:not-2-trapped}, we conclude that the robber never has to become trapped, so $c(G) > 3$.

By \cite{ball}, if $G$ is a generalised Petersen graph, then $c(G) \le 4$, so $c(G) = 4$.
\end{proof}

\textsc{Acknowledgements.}  The authors thank Dave Morris for his careful reading of their paper and his helpful suggestions.

\end{document}